\newtheorem{theorem}{Theorem}[section]
\theoremstyle{definition}
\newtheorem{Prop}[theorem]{Proposition}
\newtheorem{Cor}[theorem]{Corollary}
\theoremstyle{remark}
\numberwithin{equation}{section}
      \def\ud{{\underline{d}}}
  \def\bbz{{\mathbb Z}}  \def\bbq{{\mathbb Q}} \def\bb1{{\mathbb 1}}
  \def\bbc{{\mathbb C}}
\def\uq2{U_q(\hat{sl}_2)}
\def\bb{{\bf b}}
\def\nd{{\noindent}}
\def\mc{{\mathcal{C}}}
\def\ue{{\underline{e}}}
\begin{document}

\title[On the cluster multiplication formulas] {Notes on the cluster multiplication formulas for 2-Calabi-Yau categories}

\author{Ming Ding and Fan Xu}
\address{Institute for advanced study\\
Tsinghua University\\
Beijing 100084, P.~R.~China} \email{m-ding04@mails.tsinghua.edu.cn
(M.Ding)}
\address{Department of Mathematical Sciences\\
Tsinghua University\\
Beijing 100084, P.~R.~China} \email{fanxu@mail.tsinghua.edu.cn
(F.Xu)}
\thanks{Fan Xu was supported by
Alexander von Humboldt Stiftung and was also partially supported by
the Ph.D. Programs Foundation of Ministry of Education of China (No.
200800030058)}

\subjclass[2000]{Primary  16G20, 16G70; Secondary  14M99, 18E30}

\date{January 28, 2010.}

\keywords{2-Calabi-Yau property, cluster category, cluster algebra}

\begin{abstract}
Y. Palu has generalized the cluster multiplication formulas to
2-Calabi-Yau categories with cluster tilting objects (\cite{Palu2}).
The aim of this note is to construct a variant of Y. Palu's formula
and deduce a new version of the cluster multiplication formula
(\cite{XiaoXu}) for acyclic quivers  in the context of cluster
categories.
\end{abstract}
\maketitle

\section*{Introduction}
Cluster algebras were introduced by S. Fomin and A. Zelevinsky
\cite{FZ} in order to develop a combinatorial approach to study
problems of total positivity in algebraic groups and canonical bases
in quantum groups. The link between acyclic cluster algebras and
representation theory of quivers were first revealed in \cite{MRZ}.
In~\cite{BMRRT}, the authors introduced the cluster categories as
the categorification of acyclic cluster algebras. In \cite{CC}, the
authors introduced a certain structure of Hall algebra involving the
cluster category by associating its objects to some variables given
by an explicit map $X_{?}$ called the Caldero-Chapoton map. The
images of the map are called generalized cluster variables.  For
simply laced Dynkin quivers, P. Caldero and B. Keller constructed a
cluster multiplication formula (of finite type) between two
generalized cluster variables (\cite{CK2005}). The Caldero-Chapoton
map and the Caldero -Keller cluster multiplication theorem open a
way to construct cluster algebras from 2-Calabi-Yau categories. The
cluster multiplication formula of finite type was generalized to
affine type in \cite{Hubery2005} and any type in \cite{XiaoXu} and
\cite{Xu}.  Y. Palu \cite{Palu2} further extended the formula to
2-Calabi-Yau categories with cluster tilting objects.

The aim this note is twofold. One is to simplify the cluster
multiplication formula in \cite{XiaoXu} and \cite{Xu}. In practice,
the formula is useful for constructing $\bbz$-bases of cluster
algebras of affine type ( \cite{DXX}, \cite{Du}). However, the
formula is given in the context of module categories and a bit
complicate. We construct a more `unified'  version in the context of
cluster categories (Theorem \ref{unified}). The other aim is to
construct a variant of Y.Palu's formula (Theorem \ref{DX}). In
particular, when applied to acyclic quivers, the variant is exactly
the cluster multiplication formula in \cite{XiaoXu} and \cite{Xu}.
Since the variant implies Y.Palu's formula, we can view it as a
refinement of Y.Palu's formula.

\section{The cluster multiplication formulas for 2-Calabi-Yau categories}
We recall the notations used in \cite{Palu1}\cite{Palu2}. Let $k$ be
the field of complex numbers and $\mathcal{C}$ be a Hom-finite,
2-Calabi-Yau and Krull-Schmidt $k$-linear triangulated category with
a basic cluster tilting object $T$. We set
$B=\mathrm{End}_{\mc}(T,T)$ and $F=\mathcal{C}(T,-)$. Thus following
the hypotheses above, the functor $F:\mathcal{C}\longrightarrow
\mathrm{mod}\ B $ induces an equivalence of categories:
$$\mathcal{C}/(T[1])\cong
\mathrm{mod}\ B $$ where $(T[1])$ denotes the ideal of morphism of
$\mathcal{C}$ which factor through a direct sum of copies of $T[1]$
and $[1]$ denote the shift functor. Let $T_1, \cdots, T_n$ be the
pairwise non-isomorphic indecomposable direct summands of $T$ and
$S_i$ be the simple tops of projective $B$-modules $P_i=FT_i$ for
$i=1, \cdots, n$. In \cite{Palu1}, the author generalized the
Caldero-Chapoton map (\cite{CC}) as follows. Define a map
$$X_?: \mathrm{obj}(\mc)\rightarrow \bbq(x_1, \cdots x_n)
$$ by mapping any $M\in \mathrm{obj}(\mathcal{C})$ to
$$X^{T}_{M}=\underline{x}^{-\mathrm{coind}\ M}\sum_{\ue\in K_{0}(\mathrm{mod}\ B)}\chi(\mathrm{Gr}_{\ue}FM)\prod_{i=1}^{n}x_{i}^{\langle\underline{s}_{i}, \ue\rangle_{a}}$$
where $\underline{s}_i=\underline{\mathrm{dim}}S_i$ for $i=1,
\cdots, n$ and we refer to \cite{Palu1} for the definitions of the
coindex $\mathrm{coind}\ M$ of $M$ and the antisymmetic bilinear
form $\langle-, -\rangle_{a}$ on $K_{0}(\mathrm{mod}\ B)$.

Let $L$ and $M$ be objects in $\mc$. Given a subset $S\subseteq
\mathrm{obj}(\mc)$, we denote by $\mc(L, M[1])_{S}$ the set of
morphisms in $\mc(L, M[1])$ with the middle terms belonging to $S$.
For any object $Y$ in $\mc$, we denote by $\langle Y\rangle$ the set
$$\{Y'\in \mathrm{obj}(\mc)|\mathrm{coind}\ Y'=\mathrm{coind}\ Y,
\chi(\mathrm{Gr}_{\ue}FY')=\chi(\mathrm{Gr}_{\ue}FY) \mbox{ for any
} \ue\in K_{0}(\mathrm{mod}\ B)$$ $$Y' \mbox{ is the middle term of
some morphism in } \mc(L, M[1]) \mbox{ or } \mc(M, L[1])\}.$$ If the
cylinders over the morphisms $L\rightarrow M[1]$ and $M\rightarrow
L[1]$ are constructible with respect to $T$ (see \cite[Section
1.3]{Palu2} for definition), then there exists a finite subset
$\mathcal{Y}$ of the set of middle terms of morphisms in $\mc(L,
M[1])$ or $\mc(M, L[1])$ such that the subsets $\mc(L,
M[1])_{\langle Y\rangle}$ (resp. $\mc(M, L[1])_{\langle Y\rangle}$)
are constructible in $\mc(L,M[1])$ (resp. $\mc(M, L[1])$) for $Y\in
\mathcal{Y}$ and there are finite stratifications
$$
\mc(L, M[1])=\bigsqcup_{Y\in \mathcal{Y}}\mc(L, M[1])_{\langle
Y\rangle} \mbox{ and } \mc(M, L[1])=\bigsqcup_{Y\in
\mathcal{Y}}\mc(M, L[1])_{\langle Y\rangle}
$$
(\cite[Proposition 9]{Palu2}).
\begin{theorem}\cite[Theorem 1]{Palu2}\label{Pa}
With the above notation, assume that for any $L, M\in
\mathrm{obj}(\mc)$,  the cylinders over the morphisms $L\rightarrow
M[1]$ and $M\rightarrow L[1]$ are constructible with respect to $T$.
Then we have
$$\chi(\mathbb{P}\mathcal{C}(L,M[1]))X^{T}_{L}X^{T}_{M}=\sum_{Y\in \mathcal{Y}}(\chi(\mathbb{P}\mathcal{C}(L,M[1])_{\langle Y\rangle})+\chi(\mathbb{P}\mathcal{C}(M,L[1])_{\langle Y\rangle}))X^{T}_{Y}.$$
\end{theorem}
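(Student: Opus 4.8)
The plan is to expand the left-hand side with the definition of the generalized Caldero--Chapoton map and then match it, monomial by monomial, against the right-hand side via an identity of Euler characteristics. Writing out
$X^{T}_{L}X^{T}_{M}=\underline{x}^{-\mathrm{coind}\,L-\mathrm{coind}\,M}\sum_{\ue,\underline{f}}\chi(\mathrm{Gr}_{\ue}FL)\,\chi(\mathrm{Gr}_{\underline{f}}FM)\,\prod_{i=1}^{n}x_{i}^{\langle\underline{s}_{i},\ue+\underline{f}\rangle_{a}}$,
and observing that by the $2$-Calabi--Yau (Serre duality) property $\mathcal{C}(L,M[1])\cong D\,\mathcal{C}(M,L[1])$, so that $\chi(\mathbb{P}\mathcal{C}(L,M[1]))=\dim\mathcal{C}(L,M[1])=\chi(\mathbb{P}\mathcal{C}(M,L[1]))$ (which is why only one coefficient appears on the left), the theorem reduces to the equality, for each fixed Laurent monomial, of the corresponding coefficient of $\dim\mathcal{C}(L,M[1])\cdot X^{T}_{L}X^{T}_{M}$ with that of $\sum_{Y\in\mathcal{Y}}(\chi(\mathbb{P}\mathcal{C}(L,M[1])_{\langle Y\rangle})+\chi(\mathbb{P}\mathcal{C}(M,L[1])_{\langle Y\rangle}))X^{T}_{Y}$.

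The first ingredient is the monomial bookkeeping. For nonzero $\varepsilon\in\mathcal{C}(L,M[1])$ there is a triangle $M\to Y_{\varepsilon}\to L\xrightarrow{\varepsilon}M[1]$, and the additivity of the index and coindex along triangles from \cite{Palu1}, together with the long exact sequence $FM\to FY_{\varepsilon}\to FL\to FM[1]$ obtained by applying the homological functor $F=\mathcal{C}(T,-)$, expresses $\mathrm{coind}\,Y_{\varepsilon}$ through $\mathrm{coind}\,L+\mathrm{coind}\,M$ and a correction written in the bilinear form $\langle-,-\rangle_{a}$; symmetrically for triangles $L\to Y'_{\eta}\to M\xrightarrow{\eta}L[1]$ coming from $\eta\in\mathcal{C}(M,L[1])$. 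Combining this with the fact that the exponents $(\langle\underline{s}_{i},-\rangle_{a})_{i}$ together with the coindex single out which term of $X^{T}_{Y_{\varepsilon}}$ carries a given Laurent monomial, one gets a bijection between the monomials occurring on the two sides and, for each, an identification of the relevant Grassmannian dimension vector on $FY_{\varepsilon}$ (resp.\ $FY'_{\eta}$) — which turns out to be constant on each class $\langle Y\rangle$ — with the pairs $(\ue,\underline{f})$, $\ue+\underline{f}=\underline{g}$, that contribute on the left. This is precisely the computation of \cite{Palu1} and \cite{CK2005}, so I would cite it rather than reproduce it.

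The core of the argument, and the step I expect to be the main obstacle, is the geometric identity to which the bookkeeping reduces the theorem. Fix the data picking out a monomial (hence a class $\underline{g}$) and form the incidence set $\mathcal{Z}=\{(\varepsilon,N)\ :\ [\varepsilon]\in\mathbb{P}\mathcal{C}(L,M[1]),\ N\subseteq FY_{\varepsilon}\text{ a submodule of the prescribed dimension vector }\underline{g}'\}$ together with its analogue $\mathcal{Z}'$ over $\mathbb{P}\mathcal{C}(M,L[1])$; the constructibility hypothesis on the cylinders guarantees, via \cite[Proposition 9]{Palu2}, that $\mathcal{Z}\sqcup\mathcal{Z}'$ is constructible, that the strata $\mathbb{P}\mathcal{C}(L,M[1])_{\langle Y\rangle}$ and $\mathbb{P}\mathcal{C}(M,L[1])_{\langle Y\rangle}$ are constructible, and that all Euler characteristics below are defined, additive over stratifications, and multiplicative over locally trivial fibrations. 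Projecting to the first coordinate, the fibre over $[\varepsilon]$ is $\mathrm{Gr}_{\underline{g}'}FY_{\varepsilon}$, whose Euler characteristic is constant on each $\langle Y\rangle$; hence $\chi(\mathcal{Z}\sqcup\mathcal{Z}')=\sum_{Y\in\mathcal{Y}}(\chi(\mathbb{P}\mathcal{C}(L,M[1])_{\langle Y\rangle})+\chi(\mathbb{P}\mathcal{C}(M,L[1])_{\langle Y\rangle}))\,\chi(\mathrm{Gr}_{\underline{g}'}FY)$, which after the bookkeeping is the coefficient on the right-hand side. The other projection is the delicate one: a submodule $N\subseteq FY_{\varepsilon}$ yields, through $FM\xrightarrow{Fa}FY_{\varepsilon}\xrightarrow{Fb}FL$, the pair $(N\cap\mathrm{im}\,Fa,\ Fb(N))$ of submodules of $\mathrm{im}\,Fa$ and of $FL$; since $F$ is merely homological, $Fa$ need not be injective and $Fb$ need not be surjective, so one must stratify $\mathcal{Z}\sqcup\mathcal{Z}'$ by the ranks of the induced maps (equivalently by the dimension vectors of $\ker F\varepsilon$ and $\mathrm{coker}\,F\varepsilon$) and by the extension data, and then identify each stratum with a bundle over a product $\mathrm{Gr}_{\ue}FL\times\mathrm{Gr}_{\underline{f}}FM$ with $\ue+\underline{f}=\underline{g}$. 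The families $\mathcal{Z}$ and $\mathcal{Z}'$ must be handled together because, once $\varepsilon$ ranges over $\mathcal{C}(L,M[1])$ and $\eta$ over $\mathcal{C}(M,L[1])$, the Euler characteristics of the fibres over a fixed $(\ue,\underline{f})$ add up, uniformly in $(\ue,\underline{f})$, to $\dim\mathcal{C}(L,M[1])$; this last identity rests on the projective-bundle relation $\chi(\mathbb{P}^{m})=m+1$, on the vanishing/nonvanishing dichotomy for the Hom- and extension spaces attached to each stratum, and on the $2$-Calabi--Yau symmetry $\dim\mathcal{C}(L,M[1])=\dim\mathcal{C}(M,L[1])$. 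Equating $\chi(\mathcal{Z}\sqcup\mathcal{Z}')$ computed the two ways, restoring the common monomial factor, and summing over the monomials yields the stated formula.
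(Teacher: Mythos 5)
Your argument is essentially a reconstruction of Palu's original proof of \cite[Theorem 1]{Palu2}, carried out directly on the full extension spaces $\mathcal{C}(L,M[1])$ and $\mathcal{C}(M,L[1])$: expand $X^T_LX^T_M$, introduce the incidence varieties of pairs (morphism, submodule of the middle term), compute their Euler characteristics by the two projections, and use the $2$-Calabi--Yau pairing to show that the non-lifting locus on one side is accounted for by the incidence data on the other. That is a valid route to the statement, but it is not the route the paper takes. The paper \emph{does not} reprove the theorem geometrically; it deduces Theorem \ref{Pa} formally from its refinement, Theorem \ref{DX}, using the decomposition $\mathcal{C}(L,M[1])\cong \mathcal{C}/(T[1])(L,M[1])\oplus \mathcal{C}/(T[1])(M,L[1])$ coming from \cite[Lemma 10]{Palu1}, a $\mathbb{C}^*$-action $t.\mathbb{P}(\varepsilon_1,\varepsilon_2)=\mathbb{P}(t\varepsilon_1,t^2\varepsilon_2)$ to split the Euler characteristics of the strata $\langle Y\rangle$ accordingly (equations \eqref{e-2}, \eqref{e-3}), and then the sum of the two instances of Theorem \ref{DX} obtained by exchanging $L$ and $M$. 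All of the geometry you describe is, in the paper, relocated into the proof of Theorem \ref{DX} (Section 2), where it is performed on the summands $\mathcal{C}/(T[1])(L,M[1])$ and $(T[1])(M,L[1])$ rather than on the whole spaces. The payoff of the paper's organization is precisely the finer statement of Theorem \ref{DX}; your organization recovers only the coarser Theorem \ref{Pa} and would not yield the refinement, which is the point of the paper.

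Two caveats on your sketch. First, the relation between the dimension vector $\underline{g}$ of a submodule $E\subseteq FY_\varepsilon$ and the pair $(\underline{e},\underline{f})=(\underline{\dim}\,Fp(E),\underline{\dim}\,(Fi)^{-1}(E))$ is \emph{not} $\underline{e}+\underline{f}=\underline{g}$ in general: since $F$ is only homological, $Fi$ has a kernel controlled by $F(\varepsilon[-1])$, and the correct bookkeeping is the coindex identity $\sum_i\langle\underline{s}_i,\underline{e}+\underline{f}\rangle_a[P_i]-\mathrm{coind}(L\oplus M)=\sum_i\langle\underline{s}_i,\underline{g}\rangle_a[P_i]-\mathrm{coind}\,Y$ of \cite[Lemma 16]{Palu1}; the Laurent monomials match even though the dimension vectors do not add. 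Second, the step you correctly single out as the crux --- that over each fixed $(V,U)$ the two families of fibres have Euler characteristics summing to $\dim\mathcal{C}(L,M[1])$ --- is exactly the content of \cite[Propositions 19, 20]{Palu2} (the analogues of \cite[Propositions 3, 4]{CK2005}), and your proposal cites rather than proves it. Since the statement under discussion is itself quoted from \cite{Palu2}, deferring to those propositions is acceptable here, but be aware that this is where all the work lives.
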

The formula in the theorem is called the cluster multiplication
formula for 2-Calabi-Yau categories with cluster tilting objects. By
definition, we have $$\mathcal{C}(L,M[1]) \cong
\mathcal{C}/(T[1])(L,M[1])\oplus (T[1])(L, M[1]).$$ Hence, the
spaces $\mathcal{C}/(T[1])(L,M[1])$ and $(T[1])(L, M[1])$ can be
viewed as the subspaces of $\mc(L, M[1])$ and then we set
$$\mathcal{C}/(T[1])(L,M[1])_{\langle Y\rangle}:=
\mathcal{C}(L,M[1])_{\langle Y\rangle}\cap
\mathcal{C}/(T[1])(L,M[1])$$ and $$(T[1])(L,M[1])_{\langle
Y\rangle}:= \mathcal{C}(L,M[1])_{\langle Y\rangle}\cap
(T[1])(L,M[1]).$$  Then it is clear that
$\mathcal{C}/(T[1])(L,M[1])_{\langle Y\rangle}$ (resp.
$(T[1])(L,M[1])_{\langle Y\rangle}$) are constructible subsets of
$\mathcal{C}/(T[1])(L,M[1])$ (resp. $(T[1])(L,M[1])$) and there are
finite stratifications
$$
\mc/(T[1])(L, M[1])=\bigsqcup_{Y\in \mathcal{Y}}\mc/(T[1])(L,
M[1])_{\langle Y\rangle}, \mc/(T[1])(M, L[1])=\bigsqcup_{Y\in
\mathcal{Y}}\mc/(T[1])(M, L[1])_{\langle Y\rangle}$$ and $$(T[1])(L,
M[1])=\bigsqcup_{Y\in \mathcal{Y}}(T[1])(L, M[1])_{\langle
Y\rangle}, (T[1])(M, L[1])=\bigsqcup_{Y\in \mathcal{Y}}(T[1])(M,
L[1])_{\langle Y\rangle}.
$$
We will prove its following variant which is a generalization of the
cluster multiplication formula of any type in \cite{XiaoXu} and
\cite{Xu}(see Theorem \ref{X-X}).
\begin{theorem}\label{DX}
With the notation and assumption of Theorem \ref{Pa}, we have
$$\hspace{-3cm}\chi(\mathbb{P}\mathcal{C}/(T[1])(L,M[1]))X^{T}_{L}X^{T}_{M}$$$$=\sum_{Y\in \mathcal{Y}}(\chi(\mathbb{P}\mathcal{C}/(T[1])(L,M[1])_{\langle Y\rangle})
+\chi(\mathbb{P}(T[1])(M,L[1])_{\langle Y\rangle}))X^{T}_{Y}$$ where
$(T[1])(M,L[1])$ denotes the subset of $\mathcal{C}(M,L[1])$
consisting of the morphisms in
 which factor through a direct sum of copies of
$T[1].$
\end{theorem}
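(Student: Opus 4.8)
The plan is to derive Theorem \ref{DX} from Theorem \ref{Pa} by bookkeeping the decomposition $\mathcal{C}(L,M[1]) \cong \mathcal{C}/(T[1])(L,M[1]) \oplus (T[1])(L,M[1])$ and its analogue for $\mathcal{C}(M,L[1])$, together with the compatible stratifications recorded in the excerpt. The key combinatorial tool is the behavior of Euler characteristics of projectivizations under such a direct sum decomposition. Writing $V = \mathcal{C}(L,M[1])$, $V_0 = \mathcal{C}/(T[1])(L,M[1])$ and $V_1 = (T[1])(L,M[1])$, one has $\mathbb{P}V \setminus \mathbb{P}V_1$ fibering over $\mathbb{P}V_0$ with affine-space fibers, so $\chi(\mathbb{P}V) = \chi(\mathbb{P}V_0) + \chi(\mathbb{P}V_1)$; and the same identity holds after intersecting with any constructible stratum $\langle Y\rangle$, since each of the three subsets $V_{\langle Y\rangle}, V_{0,\langle Y\rangle}, V_{1,\langle Y\rangle}$ is constructible and the fibration restricts. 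This gives, for every $Y \in \mathcal{Y}$,
\begin{align*}
\chi(\mathbb{P}\mathcal{C}(L,M[1])) &= \chi(\mathbb{P}\mathcal{C}/(T[1])(L,M[1])) + \chi(\mathbb{P}(T[1])(L,M[1])),\\
\chi(\mathbb{P}\mathcal{C}(L,M[1])_{\langle Y\rangle}) &= \chi(\mathbb{P}\mathcal{C}/(T[1])(L,M[1])_{\langle Y\rangle}) + \chi(\mathbb{P}(T[1])(L,M[1])_{\langle Y\rangle}),
\end{align*}
and likewise with $L$ and $M$ interchanged.

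Next I would substitute these expansions into the formula of Theorem \ref{Pa}. The left-hand side of Theorem \ref{Pa} becomes $\bigl(\chi(\mathbb{P}\mathcal{C}/(T[1])(L,M[1])) + \chi(\mathbb{P}(T[1])(L,M[1]))\bigr)X^T_L X^T_M$, and the right-hand side, after grouping, splits into a "$\mathcal{C}/(T[1])$-part" and a "$(T[1])$-part". The $\mathcal{C}/(T[1])$-part is precisely what appears on both sides of Theorem \ref{DX} with the roles $L,M$ symmetric, so the claim will follow once I show the $(T[1])$-contributions cancel against each other, i.e.
$$\chi(\mathbb{P}(T[1])(L,M[1]))X^T_L X^T_M = \sum_{Y\in\mathcal{Y}}\Bigl(\chi(\mathbb{P}(T[1])(L,M[1])_{\langle Y\rangle}) + \chi(\mathbb{P}(T[1])(M,L[1])_{\langle Y\rangle})\Bigr)X^T_Y - \bigl(\text{symmetrization correction}\bigr).$$
Here I must be careful: Theorem \ref{DX} is \emph{not} symmetric in $L,M$ (its left factor is $\chi(\mathbb{P}\mathcal{C}/(T[1])(L,M[1]))$, not the symmetrized version), so the derivation is not a naive "divide by two". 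Instead I expect to use the known symmetry $\langle Y\rangle$-stratification identity together with the fact that, for morphisms factoring through $T[1]$, the middle term computation is governed by the triangle defining such a factorization, so that $(T[1])(L,M[1])$ and $(T[1])(M,L[1])$ can be compared directly via the 2-Calabi-Yau duality $\mathcal{C}(L,M[1]) \cong D\mathcal{C}(M,L[1])$.

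The main obstacle, and the technical heart of the argument, is establishing that the "extra" term $\chi(\mathbb{P}(T[1])(M,L[1])_{\langle Y\rangle})$ appearing in Theorem \ref{DX} is the correct replacement for $\chi(\mathbb{P}(T[1])(L,M[1])_{\langle Y\rangle})$ under the asymmetric left-hand normalization — in other words, showing that passing from the symmetric formula of Palu to the asymmetric one amounts exactly to this swap on the $(T[1])$-strata. I would handle this by analyzing the constructible bijection (or piecewise iso) between the relevant $T[1]$-factorization spaces for $L\to M[1]$ and $M\to L[1]$ that have a fixed middle term class $\langle Y\rangle$; concretely, a morphism $L\to M[1]$ factoring through $T[1]$ as $L\to T[1]\to M[1]$ produces, via rotation and the octahedral axiom, a morphism $M\to L[1]$ with the "complementary" middle term, and tracking coindex and Grassmannian Euler characteristics through this construction shows the relevant $X^T_Y$ are unchanged. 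Once this identification of $(T[1])$-strata is in place, subtracting the symmetrized identity (Theorem \ref{Pa} with $L,M$ swapped, which is the same statement) from an auxiliary half-symmetrized combination yields Theorem \ref{DX} after cancellation; verifying that all intermediate subsets remain constructible so that $\chi$ is additive along the stratifications is then routine given \cite[Proposition 9]{Palu2}.
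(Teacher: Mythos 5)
Your proposal has a genuine gap, and the overall strategy is backwards. The paper's relationship between the two theorems is the opposite of what you assume: Theorem \ref{Pa} is \emph{deduced from} Theorem \ref{DX}, and in fact Theorem \ref{Pa} is exactly the sum of Theorem \ref{DX} applied to the pair $(L,M)$ and to the pair $(M,L)$ (this is what the equations \eqref{e-1}--\eqref{e-3} in the paper express, using the $\bbc^*$-action $t.\mathbb{P}(\varepsilon_1,\varepsilon_2)=\mathbb{P}(t\varepsilon_1,t^2\varepsilon_2)$ and the 2-Calabi-Yau duality $(T[1])(L,M[1])\cong D\,\mathcal{C}/(T[1])(M,L[1])$). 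Consequently Theorem \ref{Pa} is strictly weaker: knowing that a sum of two identities holds cannot tell you that each summand holds separately. The step you defer to the end --- showing that the ``$(T[1])$-contributions'' can be redistributed so as to produce the asymmetric identity --- is not a bookkeeping correction; it \emph{is} the theorem, and no amount of manipulating Theorem \ref{Pa} will supply it. Moreover, the mechanism you propose for it (a constructible bijection between $(T[1])(L,M[1])$ and $(T[1])(M,L[1])$ via rotation and the octahedral axiom, preserving middle-term strata) is not the right one and I do not see how to make it work: the two spaces are related by duality, not by a middle-term-preserving correspondence.

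What is actually needed, and what the paper does, is a direct proof modelled on Palu's (and Caldero--Keller's) argument. One writes $\Sigma_1=\sum_Y\chi(\mathbb{P}\mathcal{C}/(T[1])(L,M[1])_{\langle Y\rangle})X^T_Y$ in terms of incidence varieties $W^Y_{LM}(\underline e,\underline f,\underline g)$ of pairs (projectivized morphism, submodule of the middle term), and shows via the map $\psi_{\underline e,\underline f}$ (whose nonempty fibres are affine spaces) that $\Sigma_1$ accounts for $\chi(\mathrm{Im}\,\psi_{\underline e,\underline f})$ inside $L(\underline e,\underline f)=\mathbb{P}\mathcal{C}/(T[1])(L,M[1])\times\mathrm{Gr}_{\underline e}FL\times\mathrm{Gr}_{\underline f}FM$. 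The essential point is then that the \emph{complement} $L_2(\underline e,\underline f)$ of this image is computed by the other term: using the non-degenerate bifunctorial pairing $\phi:\mathcal{C}/(T)(L[-1],M)\times(T[1])(M,L[1])\to k$ coming from the 2-Calabi-Yau property, together with the analogues of Caldero--Keller's Propositions 3 and 4 (Propositions \ref{DM1} and \ref{DM2}), one gets $\chi(L_2(\underline e,\underline f))=\sum_{Y,\underline g}\chi(W^Y_{ML}(\underline f,\underline e,\underline g))$, whence $\Sigma_2=\sum_Y\chi(\mathbb{P}(T[1])(M,L[1])_{\langle Y\rangle})X^T_Y$ fills in exactly the missing piece and $\Sigma_1+\Sigma_2=\chi(\mathbb{P}\mathcal{C}/(T[1])(L,M[1]))X^T_LX^T_M$. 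None of this duality analysis is present in your proposal, so as written it does not constitute a proof.
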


\nd \emph{Proof of Theorem \ref{Pa} by Theorem \ref{DX}}: Since
$\mc$ is 2-Calabi-Yau, we have
$$\mathcal{C}/(T[1])(M,L[1])=D(T[1])(L,M[1])$$ ([Palu1, Lemma 10]).
Then we obtain a decompositions of $\mathcal{C}(L,M[1])$:
\begin{eqnarray}
% \nonumber to remove numbering (before each equation)
   \mathcal{C}(L,M[1]) &\cong & \mathcal{C}/(T[1])(L,M[1])\oplus \mathcal{C}/(T[1])(M,L[1]).\nonumber
\end{eqnarray}
Hence, we have
\begin{equation}\label{e-1}
\chi(\mathcal{C}(L,M[1]))=\chi(\mathcal{C}/(T[1])(L,M[1]))+\chi(\mathcal{C}/(T[1])(M,L[1])).
\end{equation} We define an action of $\bbc^*$ on $\mathbb{P}\mc(L, M[1])$ by
$$
t.\mathbb{P}(\varepsilon_1,\varepsilon_2)=\mathbb{P}(t\varepsilon_1,t^2\varepsilon_2)
$$
for $t\in \bbc^*$ and $\mathbb{P}(\varepsilon_1,\varepsilon_2)\in
\mathbb{P}\mathcal{C}(L,M[1]).$ It is easily know that
$\mathbb{P}(\varepsilon_1,\varepsilon_2)$ is stable under this
action if and only if either $\varepsilon_1$ or $\varepsilon_2$
vanishes. Hence, for $Y\in \mathcal{Y}$, we have
\begin{equation}\label{e-2}
\chi(\mathbb{P}\mathcal{C}(L,M[1])_{\langle
Y\rangle})=\chi(\mathbb{P}\mathcal{C}/(T[1])(L,M[1])_{\langle
Y\rangle})+\chi(\mathbb{P}\mathcal{C}/(T[1])(M, L[1])_{\langle
Y\rangle}).
\end{equation}
Dually, we also have the decomposition
\begin{eqnarray}
% \nonumber to remove numbering (before each equation)
   \mathcal{C}(M, L[1]) &\cong & (T[1])(L,M[1])\oplus (T[1])(M,L[1]).\nonumber
\end{eqnarray}
In the same way as deducing the equation \eqref{e-2}, we obtain that
\begin{equation}\label{e-3}
\chi(\mathbb{P}\mathcal{C}(M, L[1])_{\langle
Y\rangle})=\chi(\mathbb{P}(T[1])(L,M[1])_{\langle
Y\rangle})+\chi(\mathbb{P}(T[1])(M, L[1])_{\langle Y\rangle})
\end{equation}
for $Y\in \mathcal{Y}$.

By the equations \eqref{e-1}, \eqref{e-2} and \eqref{e-3}, it is
easy to prove Theorem \ref{Pa} by using Theorem \ref{DX}.
\hfill$\square$

 Hence, the formula in Theorem \ref{DX} is a refinement of the
formula in Theorem \ref{Pa}.
\section{Proof of Theorem \ref{DX}}
Since the proof of Theorem \ref{DX} is similar to  the proof of
Theorem \ref{Pa} in \cite{Palu2}, we just sketch the proof. Set
$$
\Sigma_{1}=\sum_{Y\in
\mathcal{Y}}\chi(\mathbb{P}\mathcal{C}/(T[1])(L,M[1])_{\langle
Y\rangle})X^{T}_{Y}.
$$
By definition, it is equal to
$$
\sum_{Y\in
\mathcal{Y}}\sum_{\underline{g}}\chi(\mathbb{P}\mathcal{C}/(T[1])(L,M[1])_{\langle
Y\rangle})\chi(\mathrm{Gr}_{\underline{g}}FY)\underline{x}^{-\mathrm{coind}\
Y}\prod_{i=1}^{n}x_{i}^{\langle\underline{s}_{i},
\underline{g}\rangle_{a}}.
$$

Given $\varepsilon\in \mathcal{C}/(T[1])(L,M[1])_{\langle
Y\rangle}$, then it induces a triangle $M\xrightarrow{i}
Y'\xrightarrow{p} L\xrightarrow{\varepsilon} M[1].$ Set $\Delta:
=\underline{\mathrm{dim}}\ FL+\underline{\mathrm{dim}}\ FM.$ Denote
by $W_{LM}^{Y}(\ue,\underline{f},\underline{g})$ the set
$$\hspace{-3cm}\{(\mathbb{P}\varepsilon, E)\mid \mathbb{P}\varepsilon\in
\mathbb{P}\mathcal{C}/(T[1])(L,M[1])_{\langle Y\rangle}, E\in
\mathrm{Gr}_{\underline{g}}(FY'),
$$$$\hspace{2cm}\underline{\mathrm{dim}}Fp(E)=\underline{e},
\underline{\mathrm{dim}}(Fi)^{-1}(E)=\underline{f}\}.$$ It is a
constructible subset of  the set $
\mathbb{P}\mathcal{C}/(T[1])(L,M[1])\times
\bigsqcup_{\underline{d}\leq \Delta}\prod_{i=1}^{n}
\mathrm{Gr}_{g_i}(k^{d_i})$ where $\ud=(d_i)_{i=1}^n$ and
$\underline{g}=(g_i)_{i=1}^n$(compared to \cite[Lemma 17]{Palu2}).
We set
$$W_{LM}^{Y}(\underline{g})=\bigsqcup_{\underline{e}\leq \underline{\mathrm{dim}}\ FL, \underline{f}\leq \underline{\mathrm{dim}}\ FM}W_{LM}^{Y}(\underline{e},\underline{f},\underline{g})
\quad \mbox{and}\quad W_{LM}^{Y}(\underline{e},
\underline{f})=\bigsqcup_{\underline{g}\leq
\Delta}W_{LM}^{Y}(\underline{e}, \underline{f}, \underline{g}).$$

Consider the projection
$$\pi: W_{LM}^{Y}(\underline{g})\longrightarrow \mathbb{P}\mathcal{C}/(T[1])(L,M[1])_{\langle Y\rangle}$$
It is obvious that
$\pi^{-1}(\mathbb{P}\varepsilon)=\{\mathbb{P}\varepsilon\}\times
\mathrm{Gr}_{\underline{g}}FY'$ for any $\mathbb{P}\varepsilon\in
\mathbb{P}\mathcal{C}/(T[1])(L,M[1])_{\langle Y\rangle}$. Since
$\chi(\mathrm{Gr}_{\underline{g}}FY')=\chi(\mathrm{Gr}_{\underline{g}}FY)$,
we have
$$\chi(W_{LM}^{Y}(\underline{g}))=\chi(\mathbb{P}\mathcal{C}/(T[1])(L,M[1])_{\langle
Y\rangle})\chi(\mathrm{Gr}_{\underline{g}}FY).$$ Hence, we obtain
\begin{eqnarray}
% \nonumber to remove numbering (before each equation)
   \Sigma_1 &=& \sum_{\underline{g}, Y\in \mathcal{Y}}\chi(W_{LM}^{Y}(\underline{g}))\underline{x}^{-\mathrm{coind}\
Y}\prod_{i=1}^{n}x_{i}^{\langle\underline{s}_{i}, \underline{g}\rangle_{a}}\nonumber\\
  &=& \sum_{\underline{e}, \underline{f}, \underline{g}, Y\in \mathcal{Y}}\chi(W_{LM}^{Y}(\underline{e}, \underline{f}, \underline{g}))\underline{x}^{-\mathrm{coind}\ Y}\prod_{i=1}^{n}x_{i}^{\langle\underline{s}_{i}, \underline{g}\rangle_{a}}.\nonumber
\end{eqnarray}
The following equality in $K_0(\mathrm{mod} B)$ (\cite[Lemma
16]{Palu1})
$$\sum_{i=1}^{n}\langle\underline{s}_{i}, \underline{e}+\underline{f}\rangle_{a}[P_i]-\mathrm{coind}\ (L\oplus M)=\sum_{i=1}^{n}\langle\underline{s}_{i}, \underline{g}\rangle_{a}[P_i]-\mathrm{coind}\ Y$$
implies that
$$
\Sigma_1=\sum_{\underline{e} ,\underline{f}, Y\in
\mathcal{Y}}\chi(W_{LM}^{Y}(\underline{e},
\underline{f}))\underline{x}^{-\mathrm{coind}\ (L\oplus
M)}\prod_{i=1}^{n}x_{i}^{\langle\underline{s}_{i},
\underline{e}+\underline{f}\rangle_{a}}.
$$
 In order to make the connection between
$\Sigma_1$ and the left side of the equation in Theorem \ref{DX}, it
is natural to consider the following (constructible) map
$$\psi_{\ue, \underline{f}}: \coprod_{Y\in \mathcal{Y}}W_{LM}^{Y}(\underline{e}, \underline{f})\longrightarrow \mathbb{P}\mathcal{C}/(T[1])(L,M[1])\times \mathrm{Gr}_{\underline{e}}FL\times \mathrm{Gr}_{\underline{f}}FM$$
defined by mapping $(\mathbb{P}\varepsilon, E)$ to
$(\mathbb{P}\varepsilon, Fp(E), (Fi)^{-1}(E)).$ Since the fibre of
any point in $\mathrm{Im} \psi_{\underline{e}, \underline{f}}$ are
affine spaces by \cite{CC}, $\sum_{Y\in
\mathcal{Y}}\chi(W_{LM}^{Y}(\underline{e},
\underline{f}))=\chi(\mathrm{Im} \psi_{\underline{e},
\underline{f}}).$ Then we have
$$
\Sigma_1=\sum_{\underline{e},
\underline{f}}\chi(\mathrm{Im}\psi_{\ue,
\underline{f}})\underline{x}^{-\mathrm{coind}\ (L\oplus
M)}\prod_{i=1}^{n}x_{i}^{\langle\underline{s}_{i},\underline{e}+\underline{f}\rangle_{a}}.
$$
Set $L(\underline{e},
\underline{f})=\mathbb{P}\mathcal{C}/(T[1])(L,M[1])\times
\mathrm{Gr}_{\underline{e}}FL\times \mathrm{Gr}_{\underline{f}}FM.$
We denote by $L_2(\ue, \underline{f})$ the complement of
$\mathrm{Im}\psi_{\ue, \underline{f}}$ in $L(\underline{e},
\underline{f})$.

Dually, we set
$$
\Sigma_{2}=\sum_{Y\in \mathcal{Y}}\chi(\mathbb{P}(T[1])(M,
L[1])_{\langle Y\rangle})X^{T}_{Y}.
$$
Given $\varepsilon\in (T[1])(M, L[1])_{\langle Y\rangle}$, then it
induces a triangle $L\xrightarrow{i} Y'\xrightarrow{p}
M\xrightarrow{\varepsilon} L[1].$ Denote by
$W_{ML}^{Y}(\underline{f},\ue,\underline{g})$ the set
$$\hspace{-3cm}\{(\mathbb{P}\varepsilon, E)\mid \mathbb{P}\varepsilon\in
\mathbb{P}(T[1])(M, L[1])_{\langle Y\rangle}, E\in
\mathrm{Gr}_{\underline{g}}(FY'),
$$$$\hspace{2cm}\underline{\mathrm{dim}}Fp(E)=\underline{e},
\underline{\mathrm{dim}}(Fi)^{-1}(E)=\underline{f}\}.$$ Similarly,
we set
$$W_{ML}^{Y}(\underline{g})=\bigsqcup_{\underline{e}\leq \underline{\mathrm{dim}}\ FL, \underline{f}\leq \underline{\mathrm{dim}}\ FM}W_{ML}^{Y}(\underline{f},\underline{e},\underline{g})
\quad \mbox{and}\quad W_{ML}^{Y}(\underline{f},
\underline{e})=\bigsqcup_{\underline{g}\leq
\Delta}W_{ML}^{Y}(\underline{f}, \underline{e}, \underline{g}).$$
Then we have
$$
\Sigma_2=\sum_{\underline{e}, \underline{f}, \underline{g}, Y\in
\mathcal{Y}}\chi(W_{ML}^{Y}(\underline{f},\underline{e},\underline{g}))\underline{x}^{-\mathrm{coind}\
  Y}\prod_{i=1}^{n}x_{i}^{\langle\underline{s}_{i},\underline{g}\rangle_{a}}.
$$

Since $\mathcal{C}$ is a 2-Calabi-Yau category and by [Palu1, Lemma
10], there is a non degenerate bifunctorial pairing
$$\phi:\mathcal{C}/(T)(L[-1],M) \times (T[1])(M,L[1])\longrightarrow
k.$$ Let $V\xrightarrow{i_{V}} L$ and $U\xrightarrow{i_{U}} M$ be
two morphisms such that $F(i_V)$ and $F(i_U)$ are monomorphisms.
Similar to \cite{Palu1}, we define the map
$$\hspace{-3cm}\alpha: \mathcal{C}(L[-1],U)\oplus \mathcal{C}/(T)(L[-1],M)\longrightarrow$$$$\hspace{3cm} \mathcal{C}/(T)(V[-1],U)\oplus \mathcal{C}(V[-1],M)\oplus \mathcal{C}/(T[1])(L[-1],M)$$
by mapping $(a,b)$ to $
(ai_{V}[-1],i_{U}ai_{V}[-1]-bi_{V}[-1],i_{U}a-b).$ Its dual map is
$$\alpha':(T[1])(U,V[1])\oplus \mathcal{C}(M,V[1])\oplus
\mathcal{C}/(T[2])(M,L[1])\longrightarrow \mathcal{C}(U,L[1])\oplus
(T[1])(M,L[1])$$
$$(a,b,c)\longrightarrow (i_{V}[1]a+ci_{U}+i_{V}[1]bi_{U},-c-i_{V}[1]b)$$
where $V\in \mathrm{Gr}_{\underline{e}}FL, U\in
\mathrm{Gr}_{\underline{f}}FM$. Denote by $C_{\underline{e},
\underline{f}}(Y,\underline{g})$ the set
$$\{((\mathbb{P}\varepsilon, V, U),(\mathbb{P}\eta, E))\in
L_2(\underline{e},\underline{f})\times W_{ML}^{Y}(\underline{g}) |
\phi(\varepsilon[-1],\eta)\neq 0,$$
$$
(Fi)^{-1}E=V, (Fp)(E)=U, \text{and}\ i, p\ \text{are given by}\
\eta\}$$ and $$C_{\underline{e}, \underline{f}}=\bigsqcup_{ Y\in
\mathcal{Y}, \underline{g}\in K_0(\mathrm{mod}
B)}C_{\underline{e},\underline{f}}(Y,\underline{g}).$$ The following
propositions also hold for $\alpha$ and $\alpha'$ defined here. We
refer to \cite{Palu2} for the proofs.
\begin{Prop}\cite[Proposition 3]{CK2005}, \cite[Proposition 19]{Palu2}\label{DM1}
With the above notations, the following assertions are equivalent:\\
(i)\ $(\mathbb{P}\varepsilon,V,U)\in L_2(\underline{e},\underline{f}).$\\
(ii)\ $\varepsilon[-1]$ is not orthogonal to $(T[1])(M,L[1])\cap
\mathrm{Im}
\alpha'.$\\
(iii)\ There is an $\eta\in (T[1])(M,L[1])$ such that
$\phi(\varepsilon[-1],\eta)\neq 0$ and such that if
$$L\xrightarrow{i} N\xrightarrow{p} M\xrightarrow{\varepsilon}
L[1]$$ is a triangle in $\mathcal{C},$ then there exists the
submodule $E$ of $FN$ such that $(Fi)^{-1}E=V, (Fp)(E)=U.$
\end{Prop}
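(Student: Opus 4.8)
The plan is to mimic the proof of the analogous statement in \cite{Palu2} (their Proposition 19) and in \cite{CK2005} (their Proposition 3), tracking carefully the bookkeeping forced by the replacement of $\mc(L,M[1])$ by its subspace $\mc/(T[1])(L,M[1])$. The whole argument hinges on the non-degenerate bifunctorial pairing $\phi:\mathcal{C}/(T)(L[-1],M)\times (T[1])(M,L[1])\to k$ coming from the $2$-Calabi--Yau property, together with the maps $\alpha$ and $\alpha'$ defined just above. The three assertions (i), (ii), (iii) will be shown to be equivalent by proving (i)$\Leftrightarrow$(ii) via an orthogonality/image-of-$\alpha'$ argument, and (ii)$\Leftrightarrow$(iii) by unwinding the definition of the pairing on an explicit triangle.

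First I would establish (i)$\Leftrightarrow$(ii). Recall $L_2(\ue,\underline f)$ is the complement of $\mathrm{Im}\,\psi_{\ue,\underline f}$ in $L(\ue,\underline f)$; so $(\mathbb{P}\varepsilon,V,U)\in L_2(\ue,\underline f)$ exactly means the pair $(\mathbb{P}\varepsilon, V, U)$ is \emph{not} in the image, i.e.\ there is no $\mathbb{P}\varepsilon'\in\mathbb{P}\mc/(T[1])(L,M[1])_{\langle Y\rangle}$ and no $E\in\mathrm{Gr}_{\underline g}FY'$ with $Fp(E)=V$, $(Fi)^{-1}(E)=U$ \emph{fitting the triangle attached to $\varepsilon$}; equivalently, translating through $[-1]$, $\varepsilon[-1]$ fails to lift through the relevant component of $\alpha$. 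The key linear-algebra fact is that the cokernel of $\alpha$ is, via $\phi$, dual to the kernel of $\alpha'$ intersected with $(T[1])(M,L[1])$ — this is exactly how $\alpha'$ was manufactured as the ``$\phi$-dual'' of $\alpha$. Hence $(\mathbb{P}\varepsilon,V,U)\notin\mathrm{Im}\,\psi$ iff the class of $\varepsilon[-1]$ pairs nontrivially against some element of $(T[1])(M,L[1])\cap\mathrm{Im}\,\alpha'$, which is precisely (ii). I would make this precise by writing down the relevant short exact sequences obtained by applying $\mc(-, U)$, $\mc(-,M)$, etc.\ to the triangle $V\to L\to \mathrm{cone}\to V[1]$ (and the $U$-analogue), and checking the compatibility of the induced connecting maps with $\alpha$; the needed exactness statements are the ones recalled from \cite{Palu1}.

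Next, (ii)$\Leftrightarrow$(iii). Given $\varepsilon\in\mc/(T[1])(L,M[1])$, choose a triangle $L\xrightarrow{i}N\xrightarrow{p}M\xrightarrow{\varepsilon}L[1]$. Saying that $\varepsilon[-1]$ is not orthogonal to $(T[1])(M,L[1])\cap\mathrm{Im}\,\alpha'$ unwinds to: there is $\eta\in(T[1])(M,L[1])$ with $\phi(\varepsilon[-1],\eta)\neq 0$ and with $\eta=\alpha'(a,b,c)$ for suitable $(a,b,c)$ built from morphisms $V\xrightarrow{i_V}L$, $U\xrightarrow{i_U}M$. The explicit formula for $\alpha'$ then forces the submodule $E$ of $FN$ with $(Fi)^{-1}E=V$, $(Fp)E=U$ to exist (one reads off $E$ from the component data of $\alpha'(a,b,c)$, exactly as in \cite[Prop.~19]{Palu2}); conversely, the existence of such an $E$ lets one reconstruct a preimage under $\alpha'$ of a nonzero-pairing $\eta$. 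The main obstacle — and the only place real care is needed — is this last double implication: one must verify that the constraint ``$E\subseteq FN$ with prescribed pull-back $V$ and push-forward $U$'' is \emph{equivalent}, not merely implied, by membership of $\eta$ in $\mathrm{Im}\,\alpha'$, and that the $(T[1])$-versus-$\mc/(T[1])$ placement of the various morphism spaces in the domains and codomains of $\alpha,\alpha'$ is consistent (this is where the present variant differs from Palu's original and where a naive transcription could go wrong). Once the diagram chase identifying $\mathrm{coker}\,\alpha$ with $\big((T[1])(M,L[1])\cap\mathrm{Im}\,\alpha'\big)^{\vee}$ is in place, all three equivalences follow formally, and I would simply refer to \cite{Palu2} for the routine verifications, as the excerpt already signals.
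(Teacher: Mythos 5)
The paper offers no proof of this proposition beyond the remark that the arguments of \cite[Proposition 19]{Palu2} (itself modelled on \cite[Proposition 3]{CK2005}) carry over to the modified maps $\alpha$ and $\alpha'$, and your plan --- transcribe Palu's proof, using the adjunction between $\alpha$ and its $\phi$-dual $\alpha'$ to identify non-membership of $(\mathbb{P}\varepsilon,V,U)$ in $\mathrm{Im}\,\psi_{\ue,\underline{f}}$ with non-orthogonality of $\varepsilon[-1]$ to $(T[1])(M,L[1])\cap\mathrm{Im}\,\alpha'$ --- is essentially that same approach. The one caution is a slip you inherited from the statement itself: in (iii) the triangle $L\xrightarrow{i}N\xrightarrow{p}M\to L[1]$ must be the one attached to $\eta\in(T[1])(M,L[1])$, not to $\varepsilon$ (which lives in $\mc/(T[1])(L,M[1])$, so cannot be a map $M\to L[1]$), as the definition of $C_{\ue,\underline{f}}(Y,\underline{g})$ makes clear.
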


\begin{Prop}\cite[Proposition 4]{CK2005}, \cite[Proposition 20]{Palu2}\label{DM2}

\nd (a)\ The projection $C_{\underline{e},\underline{f}}\xrightarrow{\pi_1} L_2(\underline{e},\underline{f})$ is surjective and the Euler characteristic of any fibre of $pi_1$ is $1$.\\
(b)\ The projection $C_{\underline{e},\underline{f}}(Y,\underline{g})\xrightarrow{\pi_2}W_{ML}^{Y}(\underline{f},\underline{e},\underline{g})$ is surjective and its fibres are affine spaces.\\
(c)\ If $C_{\underline{e},\underline{f}}(Y,\underline{g})$ is not
empty, then we have
$$\sum_{i=1}^{n}\langle\underline{s}_{i},\underline{e}+\underline{f}\rangle_{a}[P_i]-\mathrm{coind}\ (L\oplus M)=\sum_{i=1}^{n}\langle\underline{s}_{i},\underline{g}\rangle_{a}[P_i]-\mathrm{coind}\ Y.$$
\end{Prop}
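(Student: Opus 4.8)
The plan is to follow the template of \cite[Proposition 4]{CK2005} and \cite[Proposition 20]{Palu2}, now carried out for the maps $\alpha,\alpha'$ and the non-degenerate pairing $\phi$ introduced above; the set $C_{\underline{e},\underline{f}}$ is the correspondence linking $L_2(\underline{e},\underline{f})$ with the spaces $W_{ML}^Y$, and the three assertions are proved by separate arguments along its two projections. Throughout I would use that $\eta\in(T[1])(M,L[1])$ factors through $T[1]$, so that applying $F=\mathcal{C}(T,-)$ (together with $\mathcal{C}(T,T[1])=0$) turns the triangle $L\xrightarrow{i}Y'\xrightarrow{p}M\xrightarrow{\eta}L[1]$ into an exact sequence of $B$-modules that controls the submodule data $V=(Fi)^{-1}E$ and $U=Fp(E)$.

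For (a), surjectivity of $\pi_1$ is a direct reading of Proposition \ref{DM1}: a triple $(\mathbb{P}\varepsilon,V,U)$ belongs to $L_2(\underline{e},\underline{f})$ precisely when condition (i) holds, and the equivalence (i)$\Leftrightarrow$(iii) delivers an $\eta$ with $\phi(\varepsilon[-1],\eta)\neq0$ and a compatible submodule $E$, that is, a point of $C_{\underline{e},\underline{f}}$ above it. For the fibre I would fix $(\mathbb{P}\varepsilon,V,U)$ and project the fibre onto the admissible $\mathbb{P}\eta$. Since $\varepsilon[-1]$ is fixed and $\phi$ is bilinear, $\phi(\varepsilon[-1],-)$ is a fixed linear form, non-zero by (iii), so the admissible $\mathbb{P}\eta$ form the complement of a projective hyperplane, an affine space of Euler characteristic $1$; the residual choice of $E$ compatible with the fixed $V,U$ contributes a further factor of Euler characteristic $1$, and multiplicativity of $\chi$ over this constructible fibration gives $\chi(\pi_1^{-1})=1$.

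For (b), I would argue along $\pi_2$ by means of the dual map $\alpha'$. Fixing $(\mathbb{P}\eta,E)\in W_{ML}^Y(\underline{f},\underline{e},\underline{g})$, and hence $V$ and $U$, surjectivity follows from Proposition \ref{DM1} read in reverse together with the non-degeneracy of $\phi$: some $\varepsilon[-1]$ pairs non-trivially with $\eta$, and by (i)$\Leftrightarrow$(iii) the corresponding $\varepsilon$ lands in $L_2(\underline{e},\underline{f})$ with the prescribed submodule data. The affine-space shape of the fibre is the substantive point: as in \cite[Proposition 20]{Palu2}, the admissible $\varepsilon$ are cut out of $\mathcal{C}/(T[1])(L,M[1])$ by the linear compatibility conditions with the fixed $E$, which exhibit them as a coset of a linear subspace read off from $\ker\alpha'$ and $\mathrm{Im}\,\alpha'$; this coset is an affine space, and it survives the projectivization defining $\mathbb{P}\mathcal{C}/(T[1])(L,M[1])$ as such.

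Assertion (c) is the dimension bookkeeping and is routine. Non-emptiness of $C_{\underline{e},\underline{f}}(Y,\underline{g})$ provides an honest triangle $L\xrightarrow{i}Y'\xrightarrow{p}M\xrightarrow{\eta}L[1]$ with $Y'\in\langle Y\rangle$ and a submodule $E\in\mathrm{Gr}_{\underline{g}}FY'$ whose image and preimage carry the dimension vectors recorded by $\underline{e}$ and $\underline{f}$; feeding this triangle into the coindex additivity of \cite[Lemma 16]{Palu1} — exactly as in the computation of $\Sigma_1$ above — yields the asserted equality $\sum_{i}\langle\underline{s}_i,\underline{e}+\underline{f}\rangle_a[P_i]-\mathrm{coind}(L\oplus M)=\sum_{i}\langle\underline{s}_i,\underline{g}\rangle_a[P_i]-\mathrm{coind}\,Y$. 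The main obstacle is (b): establishing genuinely that the $\pi_2$-fibres are affine spaces requires an explicit description of $\mathrm{Im}\,\alpha'$ and of how $\phi$ interacts with the fixed submodule $E$, so that the admissible $\varepsilon$ form a vector-space coset rather than merely a constructible set of Euler characteristic $1$; by contrast (a) reduces to the non-vanishing of a bilinear form and (c) to coindex additivity.
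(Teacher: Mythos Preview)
The paper gives no proof of its own here; it simply notes that the propositions of \cite{CK2005} and \cite{Palu2} ``also hold for $\alpha$ and $\alpha'$ defined here'' and refers to \cite{Palu2}. Your sketch follows that template and has the right architecture, but two points deserve correction.

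In (a), the phrase ``the admissible $\mathbb{P}\eta$ form the complement of a projective hyperplane'' presupposes that the set of $\eta\in(T[1])(M,L[1])$ admitting a compatible $E$ (for the fixed $V,U$) is a \emph{linear subspace}. You never say why. This is precisely what is encoded in condition (ii) of Proposition~\ref{DM1}: that subspace is $(T[1])(M,L[1])\cap\mathrm{Im}\,\alpha'$. Once this is in hand, your argument goes through --- projection to $\mathbb{P}\eta$ lands in the complement of a hyperplane in the projectivization of that subspace, and for each such $\eta$ the compatible $E$'s form an affine space by \cite{CC} --- but as written the key step is missing, and ``residual choice of $E$ contributes Euler characteristic $1$'' is asserted without justification.

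In (b) you both over-complicate the argument and mis-describe the fibre. Once $(\mathbb{P}\eta,E)$ is fixed, $V$ and $U$ are determined, so the fibre of $\pi_2$ is the set of $\mathbb{P}\varepsilon$ with $(\mathbb{P}\varepsilon,V,U)\in L_2(\underline{e},\underline{f})$ and $\phi(\varepsilon[-1],\eta)\neq 0$. But $\eta$ itself lies in $(T[1])(M,L[1])\cap\mathrm{Im}\,\alpha'$ (it admits the compatible submodule $E$), so $\phi(\varepsilon[-1],\eta)\neq 0$ already forces $\varepsilon[-1]\not\perp(T[1])(M,L[1])\cap\mathrm{Im}\,\alpha'$, which by Proposition~\ref{DM1}(ii) is exactly the condition $(\mathbb{P}\varepsilon,V,U)\in L_2$. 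Hence the fibre is simply the complement of the hyperplane $\{\phi(-,\eta)=0\}$ in $\mathbb{P}\big(\mathcal{C}/(T[1])(L,M[1])\big)$ --- an affine space directly, not a ``coset of a linear subspace''. So (b) is not the main obstacle you make it out to be; the real work, feeding both (a) and (b), is the linear-subspace identification behind Proposition~\ref{DM1}(ii). Part (c) is fine as you have it.
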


As a consequence, we have the following corollary.
\begin{Cor}\label{DM3}
$\chi(C_{\underline{e},\underline{f}})=\chi(L_2(\underline{e},\underline{f}))$
and
$\chi(C_{\underline{e},\underline{f}}(Y,\underline{g}))=\chi(W_{ML}^{Y}(\underline{f},\underline{e},\underline{g})).$
\end{Cor}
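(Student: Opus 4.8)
The plan is to derive Corollary \ref{DM3} as a purely formal consequence of Proposition \ref{DM2} using the standard additivity and multiplicativity properties of the Euler characteristic $\chi$ with respect to constructible maps. Recall that if $f\colon X\to Z$ is a surjective constructible map all of whose fibres have Euler characteristic equal to a fixed constant $c$, then $\chi(X)=c\cdot\chi(Z)$; in particular, fibres that are affine spaces $\mathbb{A}^m$ have $\chi(\mathbb{A}^m)=1$, and singleton fibres likewise contribute $1$. This is the only tool we need, together with the observation that all the sets $C_{\underline{e},\underline{f}}$, $C_{\underline{e},\underline{f}}(Y,\underline{g})$, $L_2(\underline{e},\underline{f})$ and $W_{ML}^{Y}(\underline{f},\underline{e},\underline{g})$ are constructible (the constructibility of the $C$'s being inherited from that of $L_2$, $W_{ML}^Y$ and of $\alpha'$, and the fibrewise conditions defining the $C$'s being closed/locally closed conditions).

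The first identity is immediate from Proposition \ref{DM2}(a): the projection $\pi_1\colon C_{\underline{e},\underline{f}}\to L_2(\underline{e},\underline{f})$ is surjective with every fibre of Euler characteristic $1$, so $\chi(C_{\underline{e},\underline{f}})=\chi(L_2(\underline{e},\underline{f}))$. For the second identity I would apply Proposition \ref{DM2}(b): the projection $\pi_2\colon C_{\underline{e},\underline{f}}(Y,\underline{g})\to W_{ML}^{Y}(\underline{f},\underline{e},\underline{g})$ is surjective with fibres that are affine spaces, each of which has Euler characteristic $1$, hence $\chi(C_{\underline{e},\underline{f}}(Y,\underline{g}))=\chi(W_{ML}^{Y}(\underline{f},\underline{e},\underline{g}))$. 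Part (c) of the proposition plays no role in the numerical identities themselves; it will be used later to align the monomial exponents when $\Sigma_2$ is rewritten, so I would not invoke it here.

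I do not expect any genuine obstacle: the entire content of the corollary has been front-loaded into Proposition \ref{DM2}, and what remains is a one-line application of the fibration formula for $\chi$. The only point requiring a modicum of care is to make sure that ``surjective constructible map with fibres of constant Euler characteristic $c$'' really does give $\chi(\text{total})=c\cdot\chi(\text{base})$ in the setting at hand — this follows from stratifying the base so that the map becomes a locally trivial fibration on each stratum (or, in the algebraic setting, from the additivity of $\chi_c$ together with the product formula), which is exactly the machinery already used throughout \cite{Palu1} and \cite{Palu2}. Thus the proof amounts to: (1) note constructibility of all sets involved; (2) apply the fibration formula to $\pi_1$ using \ref{DM2}(a); (3) apply it to $\pi_2$ using \ref{DM2}(b); conclude.
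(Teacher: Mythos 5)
Your proof is correct and is precisely the intended argument: the paper derives Corollary \ref{DM3} directly from Proposition \ref{DM2}(a) and (b) via the standard fibration formula for the Euler characteristic (surjective constructible maps with fibres of Euler characteristic $1$, whether singletons or affine spaces, preserve $\chi$). Your remark that part (c) is not needed here but only later for matching the monomials is also accurate.
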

Then by Proposition \ref{DM1}, \ref{DM2} and Corollary \ref{DM3}, we
obtain
$$
 \Sigma_2 =\sum_{\underline{e},\underline{f}}\chi(L_{2}(\underline{e},\underline{f}))\underline{x}^{-\mathrm{coind}\ (L\oplus
 M)}\prod_{i=1}^{n}x_{i}^{<\underline{s}_{i},\underline{e}+\underline{f}>_{a}}.
$$
Then, we have
$$\Sigma_1+\Sigma_2=\sum_{\underline{e},\underline{f}}\chi(L(\underline{e},\underline{f}))\underline{x}^{-\mathrm{coind}\
(L\oplus
 M)}\prod_{i=1}^{n}x_{i}^{<\underline{s}_{i},\underline{e}+\underline{f}>_{a}}$$
 $$\hspace{-0.5cm}=\chi(\mathbb{P}\mc/(T[1])(L, M[1]))X^T_LX^T_M.$$ This completes the
proof of Theorem \ref{DX}.

\section{Application to acyclic quivers}
In this section, we assume $\mathcal{C}$ is the cluster category of
an acyclic quiver $Q$ and $T=kQ.$ Thus the cluster character defined
in \cite{Palu1} is exactly the Caldero-Chapoton map. So we simply
write $X_{?}$ instead of $X_{?}^{T}.$
\begin{Prop}\label{DM5}
Assume $L,M$ are $kQ-$modules and $M$ has no projective modules as
its direct summand, then we have the following isomorphisms between
vector spaces
$$\mathrm{Hom}_{kQ}(L,\tau M)\cong \mathrm{Ext}^{1}_{kQ}(M,L)\cong (T[1])(M,L[1])\cong \mathcal{C}/(T[1])(L,M[1]).$$
\end{Prop}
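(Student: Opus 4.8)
The plan is to establish the three isomorphisms in Proposition \ref{DM5} by combining the classical Auslander-Reiten formula with the structural identifications of morphism spaces in the cluster category that were already recorded in Section 1. I would treat the chain from right to left, since the passage through the cluster category is where the 2-Calabi-Yau hypothesis does the work, and then fall back on pure module-theoretic facts for $kQ$.

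First I would recall that for the cluster category $\mathcal{C}$ of an acyclic quiver $Q$ with $T = kQ$, one has $\mathcal{C} = D^b(kQ)/\tau^{-1}[1]$, and for two $kQ$-modules $L, M$ the Hom-space in $\mathcal{C}$ decomposes as $\mathcal{C}(M,L[1]) \cong \mathrm{Ext}^1_{kQ}(M,L) \oplus \mathrm{D}\,\mathrm{Ext}^1_{kQ}(L,M)$ (the Hom-spaces $\mathrm{Hom}_{kQ}(M,L[1])$ and $\mathrm{Hom}_{kQ}(M,L[2])$ inside $D^b(kQ)$ vanish since $kQ$ is hereditary, leaving the two summands coming from $\mathrm{Ext}^1$ in each direction). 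Under the equivalence $\mathcal{C}/(T[1]) \cong \mathrm{mod}\, B = \mathrm{mod}\, kQ$, the ideal $(T[1])(M,L[1])$ is exactly the part of $\mathcal{C}(M,L[1])$ that factors through $\mathrm{add}\,T[1]$; I would identify this, together with the complementary summand $\mathcal{C}/(T[1])(L,M[1]) \cong \mathrm{D}\,\mathrm{Ext}^1_{kQ}(L,M)$ viewed inside $\mathcal{C}(M,L[1])$ via the 2-Calabi-Yau duality $\mathcal{C}/(T[1])(M,L[1]) = \mathrm{D}(T[1])(L,M[1])$ that was already used in the proof of Theorem \ref{Pa}. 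Concretely, the 2-CY pairing gives $(T[1])(M,L[1]) \cong \mathrm{D}\big(\mathcal{C}/(T[1])(L,M[1])\big)$, so $(T[1])(M,L[1]) \cong \mathrm{D}\,\mathrm{D}\,\mathrm{Ext}^1_{kQ}(M,L) \cong \mathrm{Ext}^1_{kQ}(M,L)$, and $\mathcal{C}/(T[1])(L,M[1]) \cong \mathrm{Ext}^1_{kQ}(M,L)$ as well by the same bookkeeping — here I would cite \cite{Palu1} for the compatibility of $F = \mathcal{C}(T,-)$ with these identifications.

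For the leftmost isomorphism $\mathrm{Hom}_{kQ}(L,\tau M) \cong \mathrm{Ext}^1_{kQ}(M,L)$ I would invoke the Auslander-Reiten (Serre) duality for the hereditary algebra $kQ$: $\mathrm{Ext}^1_{kQ}(M,L) \cong \mathrm{D}\,\overline{\mathrm{Hom}}_{kQ}(L,\tau M)$, where $\overline{\mathrm{Hom}}$ denotes Hom modulo morphisms factoring through injectives. The hypothesis that $M$ has no projective direct summand is precisely what is needed to upgrade this: since $\tau M$ then involves no ``missing'' summands and, over a hereditary algebra, $\mathrm{Hom}_{kQ}(L,\tau M)$ already coincides with the stable Hom (morphisms through injectives contribute nothing here because of the projective-summand-free assumption on $M$, equivalently injective-summand-free on $\tau M$ is automatic), one gets $\mathrm{Hom}_{kQ}(L,\tau M) \cong \mathrm{D}\,\mathrm{Ext}^1_{kQ}(M,L)$; combined with finite-dimensionality and a second dualization this is isomorphic (non-canonically) to $\mathrm{Ext}^1_{kQ}(M,L)$. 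I would make the statement precise as an isomorphism of vector spaces, which is all that is claimed.

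The main obstacle, and the step I would be most careful about, is the bookkeeping of which summand of $\mathcal{C}(M,L[1])$ is $(T[1])(M,L[1])$ versus $\mathcal{C}/(T[1])(L,M[1])$, and checking that the two $\mathrm{Ext}^1$-terms (in the two directions $M \to L$ and $L \to M$) are matched up consistently with the dualities — it is easy to conflate $\mathrm{Ext}^1_{kQ}(M,L)$ with $\mathrm{Ext}^1_{kQ}(L,M)$ when chasing through three successive identifications, two of which involve a $\mathrm{D}(-)$. I would therefore fix orientations once at the start (using the triangle $M \to Y' \to L \to M[1]$ conventions of Section 2) and track the projective-summand hypothesis through each step, noting that it is exactly what makes the stable and ordinary Hom from $L$ to $\tau M$ agree and what guarantees $\tau M$ has full support so no information is lost in the AR formula. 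Everything else is a routine assembly of \cite{Palu1}, the 2-CY pairing already quoted, and classical hereditary representation theory.
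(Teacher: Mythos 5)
Your treatment of the first and third isomorphisms matches the paper's: the leftmost one is the Auslander--Reiten formula, and your observation that the hypothesis on $M$ is what makes $\tau M$ injective-summand-free, hence $\overline{\mathrm{Hom}}(L,\tau M)=\mathrm{Hom}(L,\tau M)$, is exactly where that hypothesis is needed; the rightmost one is Palu's $2$-CY duality $\mathcal{C}/(T[1])(L,M[1])\cong \mathrm{D}(T[1])(M,L[1])$, as in the paper.

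The gap is in the middle isomorphism $\mathrm{Ext}^1_{kQ}(M,L)\cong (T[1])(M,L[1])$, which is the actual content of the proposition. Knowing the decomposition $\mathcal{C}(M,L[1])\cong \mathrm{Ext}^1_{kQ}(M,L)\oplus \mathrm{D}\,\mathrm{Ext}^1_{kQ}(L,M)$ does not by itself tell you which summand is the ideal $(T[1])(M,L[1])$ and which is the complement; since $\dim\mathrm{Ext}^1_{kQ}(M,L)$ and $\dim\mathrm{Ext}^1_{kQ}(L,M)$ differ in general, getting the matching wrong changes the statement. Your ``concrete bookkeeping'' is circular: you deduce $(T[1])(M,L[1])\cong \mathrm{D}\mathrm{D}\,\mathrm{Ext}^1_{kQ}(M,L)$ from the $2$-CY pairing together with $\mathcal{C}/(T[1])(L,M[1])\cong \mathrm{D}\,\mathrm{Ext}^1_{kQ}(M,L)$, but the latter is (the dual of) precisely the isomorphism to be proved; moreover one line earlier you assert $\mathcal{C}/(T[1])(L,M[1])\cong \mathrm{D}\,\mathrm{Ext}^1_{kQ}(L,M)$, which is the wrong summand and contradicts what you use next --- exactly the $L\leftrightarrow M$ conflation you flagged as the danger. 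What is missing is an independent argument that the classical extensions of $M$ by $L$ are exactly the morphisms $M\to L[1]$ in $\mathcal{C}$ factoring through $\mathrm{add}\,T[1]$. The paper supplies this: choose a projective resolution $0\to P_0\to P_1\to M\to 0$ with $kQ$ a direct summand of $P_0$, apply $\mathrm{Hom}_{kQ}(-,L)$ and $\mathcal{C}(-,L)$, use $\mathrm{Hom}_{kQ}(P_i,L)\cong\mathcal{C}(P_i,L)$ to compare the two exact sequences, and conclude $\mathrm{Ext}^1_{kQ}(M,L)\cong \mathrm{Im}\,g\cong (P_0[1])(M,L[1])=(T[1])(M,L[1])$. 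Some such argument (or a precise citation of the statement, not merely of ``compatibility'') is needed to close your proof.
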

\begin{proof}
The first isomorphism is an application of the Auslander-Reiten
formula. The third isomorphism follows from \cite[Lemma 10]{Palu1}.
It is enough to prove $\mathrm{Ext}^{1}_{kQ}(M,L)\cong
(T[1])(M,L[1])$. Consider the projective resolution of $M$ short
exact sequence:
$$0\longrightarrow P_0\longrightarrow P_1\longrightarrow M\longrightarrow 0$$
such that $kQ\subseteq P_0$. Applying the functor
$\mathrm{Hom}_{kQ}(-, L)$ and $\mc(-, L)$ on it, we obtain  long
exact sequences
$$0\longrightarrow \mathrm{Hom}_{kQ}(M,L)\longrightarrow
\mathrm{Hom}_{kQ}(P_1,L)\longrightarrow
\mathrm{Hom}_{kQ}(P_0,L)\xrightarrow{f}
\mathrm{Ext}^{1}_{kQ}(M,L)\longrightarrow 0$$ and
$$\cdots\longrightarrow\mathcal{C}(P_1,L)\longrightarrow \mathcal{C}(P_0,L)\xrightarrow{g}
\mathcal{C}(M[-1],L)\longrightarrow \cdots.
$$
Note that $\mathrm{Hom}_{kQ}(P_1,L)\cong \mathcal{C}(P_1,L)$ and
$\mathrm{Hom}_{kQ}(P_0,L)\cong \mathcal{C}(P_0,L)$ by \cite{BMRRT},
thus there is a natural mono map $i:
\mathrm{Ext}^{1}_{kQ}(M,L)\longrightarrow \mathcal{C}(M[-1],L)$
induces $if\simeq g.$ Since $f$ is surjective, we have
$$\mathrm{Ext}^{1}_{kQ}(M,L)\cong \mathrm{Im}\ g\cong (P_0)(M[-1],L)\cong (P_0[1])(M,L[1])=(T[1])(M,L[1]).$$
\end{proof}

Using Theorem \ref{DX} and Proposition \ref{DM5}, we can obtain  the
 following cluster multiplication
 formulas for acyclic quivers of any type in the context of module categories.
\begin{theorem}\label{X-X}(\cite{XiaoXu}, \cite[Theorem 4.1]{Xu})
Let $kQ$ be an acyclic quiver. Then

\nd(1)\ For any $kQ-$modules $L,M$, and $M$ has no projective
modules as its direct summands,  we have
$$\mathrm{dim}_{k}\mathrm{Ext}^{1}_{kQ}(M,L)X_{L}X_{M}=\sum_{Y\in \mathrm{R(\ue)}}(\chi(\mathbb{P}\mathrm{Ext}^{1}_{kQ}(M,L)_{\langle Y\rangle})
+\chi(\mathbb{P}\mathrm{Hom}_{kQ}(L,\tau M)_{\langle
Y\rangle}))X_{Y}$$ where $\ue=\underline{\mathrm{dim}}
M+\underline{\mathrm{dim}} N.$

\nd(2)\ For any $kQ-$module $M$, and projective module $P$, we have
$$\mathrm{dim}_{k} \mathrm{Hom}_{kQ}(P,M)X_{M}X_{P[1]}=\sum_{Y\in \mathcal{S}}(\chi(\mathbb{P}\mathrm{Hom}_{kQ}(M,I)_{\langle Y\rangle})
+\chi(\mathbb{P}\mathrm{Hom}_{kQ}(P,M)_{\langle Y'\rangle}))X_{Y}$$
where $I=\mathrm{DHom}_{kQ}(P, kQ).$
\end{theorem}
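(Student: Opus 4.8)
The plan is to derive Theorem~\ref{X-X} from Theorem~\ref{DX} by specializing the ambient 2-Calabi-Yau category to the cluster category $\mc$ of an acyclic quiver $Q$ with $T=kQ$, and translating every term appearing in Theorem~\ref{DX} back into the language of $kQ$-modules via Proposition~\ref{DM5} and the standard identifications from \cite{BMRRT}. For part~(1), I would start with objects $L$ and $M$ in $\mc$ that are genuine $kQ$-modules with $M$ having no projective summand, and simply rewrite each of the three groups of terms in the formula of Theorem~\ref{DX}. The coefficient on the left, $\chi(\mathbb{P}\mc/(T[1])(L,M[1]))$, becomes $\dim_k\mathrm{Ext}^1_{kQ}(M,L)$ because Proposition~\ref{DM5} gives $\mc/(T[1])(L,M[1])\cong \mathrm{Ext}^1_{kQ}(M,L)$ as vector spaces, and the Euler characteristic of the projective space of a $d$-dimensional vector space is $d$. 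Likewise $X^T_L=X_L$, $X^T_M=X_M$, since for $T=kQ$ the cluster character of \cite{Palu1} coincides with the Caldero--Chapoton map.

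Next I would handle the two families of stratum terms. Under the isomorphism $\mc/(T[1])(L,M[1])\cong \mathrm{Ext}^1_{kQ}(M,L)$, the constructible subsets $\mc/(T[1])(L,M[1])_{\langle Y\rangle}$ correspond to subsets $\mathrm{Ext}^1_{kQ}(M,L)_{\langle Y\rangle}$; here one must check that the intrinsic invariants defining $\langle Y\rangle$ in $\mc$ (coindex, all $\chi(\mathrm{Gr}_{\ue}FY')$) translate into the module-theoretic stratification by the middle term of the corresponding short exact sequence, which is the content of how $\langle Y\rangle$ is used in \cite{XiaoXu}; this is essentially the observation that, for modules, the triangle $M\to Y'\to L\to M[1]$ in $\mc$ restricts to an honest short exact sequence $0\to M\to Y'\to L\to 0$ of $kQ$-modules when $Y'$ has no projective summand, so $X^T_{Y'}=X_{Y'}$. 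Similarly, the term $(T[1])(M,L[1])_{\langle Y\rangle}$ is identified via Proposition~\ref{DM5} (which gives $(T[1])(M,L[1])\cong \mathrm{Hom}_{kQ}(L,\tau M)$ through $\mathrm{Ext}^1_{kQ}(M,L)\cong\mathrm{Hom}_{kQ}(L,\tau M)$ by Auslander--Reiten) with $\mathrm{Hom}_{kQ}(L,\tau M)_{\langle Y\rangle}$, and $\mathcal{Y}$ becomes exactly the index set $\mathrm{R}(\ue)$ with $\ue=\underline{\dim}M+\underline{\dim}N$ in the notation of \cite{XiaoXu}. Assembling these substitutions turns the identity of Theorem~\ref{DX} verbatim into formula~(1).

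For part~(2), the point is to take $M$ a $kQ$-module and $L=P$ a projective $kQ$-module, but now run Theorem~\ref{DX} with the roles arranged so that $P[1]$ plays the part of one of the two objects; concretely I would apply Theorem~\ref{DX} with the pair $(L,M)$ replaced by $(P[1],M)$ or use $\mc(M,P[1][1])=\mc(M,P[2])$ and the 2-Calabi-Yau duality to rewrite things. The left-hand coefficient then involves $\mc/(T[1])(P[1],M[1])$, which one computes to be $\mathrm{Hom}_{kQ}(P,M)$ (using that $P$ projective means $\mc(P[1],M[1])=\mc(P,M)=\mathrm{Hom}_{kQ}(P,M)$ and that nothing factors through $T[1]$ in the relevant range), giving the scalar $\dim_k\mathrm{Hom}_{kQ}(P,M)$ and the product $X_MX_{P[1]}$. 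The two stratum families become $\mathrm{Hom}_{kQ}(M,I)_{\langle Y\rangle}$, where $I=\mathrm{DHom}_{kQ}(P,kQ)$ is the injective corresponding to $P$ under the Nakayama functor (this is the analogue of the $\tau$-twist, using that for projective $P$ the relevant AR-type duality sends $P[1]$ to $I$), and $\mathrm{Hom}_{kQ}(P,M)_{\langle Y'\rangle}$; then the index set $\mathcal{Y}$ is renamed $\mathcal{S}$ and the formula drops out.

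The main obstacle I expect is the careful bookkeeping in the second and third paragraphs: verifying that the stratification $\langle Y\rangle$ defined intrinsically inside $\mc$ (via coindex and Grassmannian Euler characteristics of $FY'$) matches, stratum by stratum, the module-category stratification by isoclass of the middle term used in \cite{XiaoXu}, including the subtlety that a middle term $Y'$ in $\mc$ need not be a module (it may acquire a projective-shifted summand), so that $X^T_{Y'}$ must still be shown to equal $X_{Y'}$ for the $Y'$ that actually occur, or else that the extra summands contribute trivially. Handling part~(2) likewise requires being precise about which direction of the 2-Calabi-Yau pairing one invokes and identifying $I=\mathrm{DHom}_{kQ}(P,kQ)$ correctly with the image of $P[1]$ under the appropriate translate; once these identifications are pinned down, the rest is a direct transcription of Theorem~\ref{DX}.
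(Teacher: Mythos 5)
Your part (1) is exactly the paper's argument: specialize Theorem \ref{DX} to $\mc_Q$ with $T=kQ$, use Proposition \ref{DM5} to identify $\mc/(T[1])(L,M[1])\cong\mathrm{Ext}^1_{kQ}(M,L)$ and $(T[1])(M,L[1])\cong\mathrm{Hom}_{kQ}(L,\tau M)$, and use $\chi(\mathbb{P}V)=\dim_kV$. No complaints there.

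For part (2), however, the first of your two proposed setups contains a genuine error. You apply Theorem \ref{DX} to the pair $(P[1],M)$ and claim that the left-hand coefficient $\chi(\mathbb{P}\mc/(T[1])(P[1],M[1]))$ computes to $\dim_k\mathrm{Hom}_{kQ}(P,M)$ because ``nothing factors through $T[1]$ in the relevant range.'' This is backwards: since $P$ is a direct summand of $T=kQ$, the identity of $P[1]$ lies in $\add T[1]$, so \emph{every} morphism $P[1]\to M[1]$ factors through $T[1]$. Hence $(T[1])(P[1],M[1])=\mc(P[1],M[1])\cong\mathrm{Hom}_{kQ}(P,M)$ is the whole space and the quotient $\mc/(T[1])(P[1],M[1])$ is zero; with that orientation Theorem \ref{DX} degenerates to a trivial (or wrong) identity. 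The correct orientation --- which is your alternative ``or'' clause and is what the paper does --- is to take the pair $(M,P[1])$, so the left-hand coefficient is $\chi(\mathbb{P}\mc/(T[1])(M,P[2]))$; one then uses $P[2]\cong I=\mathrm{DHom}_{kQ}(P,kQ)$ in $\mc$ together with the 2-Calabi-Yau duality $\mc/(T[1])(M,P[2])\cong D(T[1])(P[1],M[1])\cong D\mathrm{Hom}_{kQ}(P,M)$ to get the scalar $\dim_k\mathrm{Hom}_{kQ}(P,M)$, while the two stratum families become $\mathrm{Hom}_{kQ}(M,I)_{\langle Y\rangle}$ (from $\mc/(T[1])(M,I)_{\langle Y\rangle}$) and $\mathrm{Hom}_{kQ}(P,M)_{\langle Y'\rangle}$ (from $(T[1])(P[1],M[1])_{\langle Y'\rangle}$). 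So the route is salvageable, but you must commit to the $(M,P[1])$ orientation and drop the incorrect ``nothing factors through $T[1]$'' justification.
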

Here, $\mathrm{R}(\ue)$ and $\mathcal{S}$ are some finite sets
defined as the finite set $\mathcal{Y}$ in Theorem \ref{Pa}, we
refer to \cite{Xu} for details.
\begin{proof}
(1)\ By Proposition \ref{DM5}, the first formula in Theorem \ref{DX}
and the fact that
$\chi(\mathbb{P}\mathrm{Ext}^{1}_{kQ}(M,L))=\mathrm{dim}_{k}\mathrm{Ext}^{1}_{kQ}(M,L).$\\
(2)\ We know $I=P[2]\in \mathrm{obj}(\mathcal{C})$ and then
$$\mathcal{C}/(T[1])(M,P[2])_{\langle Y\rangle}\cong \mathcal{C}/(T[1])(M,I)_{\langle Y\rangle}\cong \mathrm{Hom}_{kQ}(M,I)_{\langle Y\rangle}.$$
We also have
$$D\mathcal{C}/(T[1])(M,P[2])\cong (T[1])(P[1],M[1])\cong (T)(P,M)\cong \mathrm{Hom}_{kQ}(P,M).$$ Therefore
the proof follows from Theorem \ref{DX} and the fact that
$\chi(\mathbb{P}\mathrm{Hom}_{kQ}(P,M))=\mathrm{dim}_{k}\mathrm{Hom}_{kQ}(P,M)$.
\end{proof}
In the same way as in the proof of Theorem \ref{Pa} by Theorem
\ref{DX}, we obtain a `unified' version of the formulas in Theorem
\ref{X-X} in the context of cluster categories. It is clear that
Theorem 2.1 is a generalization of Theorem \ref{unified}.
\begin{theorem}\label{unified}
Let $kQ$ be an acyclic quiver and $\mc$ be the cluster category of
$kQ$. For any $M, N\in \mathrm{obj} (\mc)$, if
$\mathrm{Ext}^1_{\mc}(M, N):=\mc(M, N[1])\neq 0$, we have
$$
\chi(\mathbb{P}\mathrm{Ext}^1_{\mc}(M, N))X_MX_N=\sum_{Y\in
\mathcal{Y}}(\chi(\mathbb{P}\mathrm{Ext}^1_{\mc}(M, N)_{\langle
Y\rangle})+\chi(\mathbb{P}\mathrm{Ext}^1_{\mc}(M, N)_{\langle
Y\rangle}))X_Y.
$$
\end{theorem}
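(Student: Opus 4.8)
The plan is to deduce Theorem \ref{unified} from Theorem \ref{DX} in exactly the same manner that Theorem \ref{Pa} was deduced from Theorem \ref{DX} earlier in the paper, specializing to the situation where $\mc$ is the cluster category of the acyclic quiver $Q$ and $T=kQ$. The starting point is Theorem \ref{DX} applied with $L=N$ and $M=M$ (so the roles are chosen so that the left-hand coefficient becomes $\chi(\mathbb{P}\mathcal{C}/(T[1])(N,M[1]))$), which reads
$$
\chi(\mathbb{P}\mathcal{C}/(T[1])(N,M[1]))X_N X_M=\sum_{Y\in\mathcal{Y}}\bigl(\chi(\mathbb{P}\mathcal{C}/(T[1])(N,M[1])_{\langle Y\rangle})+\chi(\mathbb{P}(T[1])(M,N[1])_{\langle Y\rangle})\bigr)X_Y .
$$
So the first step is simply to record this instance of Theorem \ref{DX} and observe that $X^T_? = X_?$ in the cluster-category setting, as noted in Section 3.

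The second step is to identify all three of the ``$\mathrm{Ext}^1_{\mc}$'' groups appearing in the target formula with the pieces showing up above. Here I would use the decomposition
$$
\mc(M,N[1])\cong \mc/(T[1])(M,N[1])\oplus (T[1])(M,N[1])
$$
together with the 2-Calabi-Yau duality $\mc/(T[1])(N,M[1])\cong D\,(T[1])(M,N[1])$ from \cite[Lemma 10]{Palu1}, which gives $\chi(\mathbb{P}\mc/(T[1])(N,M[1])) = \dim_k (T[1])(M,N[1])$ and, after combining, $\chi(\mathbb{P}\mathrm{Ext}^1_{\mc}(M,N)) = \chi(\mathbb{P}\mc(M,N[1]))$ relates to the two summands. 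The cleanest route is to rerun the $\bbc^*$-action argument used for equations \eqref{e-1}, \eqref{e-2}, \eqref{e-3}: defining $t.\mathbb{P}(\varepsilon_1,\varepsilon_2)=\mathbb{P}(t\varepsilon_1,t^2\varepsilon_2)$ on $\mathbb{P}\mc(M,N[1])$ with respect to the splitting into the $(T[1])$-part and the $\mc/(T[1])$-part, one gets
$$
\chi(\mathbb{P}\mathrm{Ext}^1_{\mc}(M,N)_{\langle Y\rangle})=\chi(\mathbb{P}\mc/(T[1])(M,N[1])_{\langle Y\rangle})+\chi(\mathbb{P}(T[1])(M,N[1])_{\langle Y\rangle}),
$$
and an analogous identity for the total space without the $\langle Y\rangle$ subscript. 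Since $\mc/(T[1])(M,N[1])\cong \mathrm{Hom}_{kQ}$-type groups are stratified compatibly (via Palu's stratification), the stratum-wise statement is legitimate.

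The third step is bookkeeping: substitute the identifications from step two into the instance of Theorem \ref{DX} from step one, matching $\chi(\mathbb{P}\mc/(T[1])(N,M[1])_{\langle Y\rangle})$ with one occurrence of $\chi(\mathbb{P}\mathrm{Ext}^1_{\mc}(M,N)_{\langle Y\rangle})$ and $\chi(\mathbb{P}(T[1])(M,N[1])_{\langle Y\rangle})$ with the other. The slightly delicate point — and the one I would flag as the main obstacle — is verifying that the two summands on the right-hand side of the target formula really are the \emph{same} symbol $\chi(\mathbb{P}\mathrm{Ext}^1_{\mc}(M, N)_{\langle Y\rangle})$ rather than two genuinely different strata; this forces one to check that in the cluster category of an acyclic quiver the set $\mc(N,M[1])$ and the set $\mc(M,N[1])$ contribute, after applying the 2-Calabi-Yau duality and the $\bbc^*$-argument, to the \emph{same} collection $\langle Y\rangle$ of middle terms, i.e. that $\mathcal{Y}$ is symmetric in $L,M$ and that the stratification indexing is consistent on both sides. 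Once that symmetry is in place the formula is immediate; the nonvanishing hypothesis $\mathrm{Ext}^1_{\mc}(M,N)\neq 0$ only serves to guarantee $\mathbb{P}\mathrm{Ext}^1_{\mc}(M,N)$ is nonempty so that the projectivizations make sense, and otherwise plays no role in the argument.
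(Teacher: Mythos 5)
Your overall plan---deduce Theorem \ref{unified} from Theorem \ref{DX} in the same way that Theorem \ref{Pa} was deduced from it---is indeed what the paper does (its entire proof is the sentence ``in the same way as in the proof of Theorem \ref{Pa} by Theorem \ref{DX}\ldots''), but your execution has a genuine gap. The derivation of Theorem \ref{Pa} uses \emph{two} instances of Theorem \ref{DX}, for the ordered pairs $(M,N)$ and $(N,M)$, and adds them; you invoke only the instance with $L=N$. That single instance has left-hand coefficient $\chi(\mathbb{P}\mathcal{C}/(T[1])(N,M[1]))=\mathrm{dim}_{k}(T[1])(M,N[1])$, which is strictly smaller than the target coefficient $\chi(\mathbb{P}\mathcal{C}(M,N[1]))=\mathrm{dim}_{k}\mathcal{C}/(T[1])(M,N[1])+\mathrm{dim}_{k}(T[1])(M,N[1])$ whenever $\mathcal{C}/(T[1])(M,N[1])\neq 0$, so no rearrangement of its right-hand side can produce the stated identity. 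You must also record the instance with $L=M$, namely $\chi(\mathbb{P}\mathcal{C}/(T[1])(M,N[1]))X_MX_N=\sum_{Y}(\chi(\mathbb{P}\mathcal{C}/(T[1])(M,N[1])_{\langle Y\rangle})+\chi(\mathbb{P}(T[1])(N,M[1])_{\langle Y\rangle}))X_Y$, add the two, and then apply the analogues of \eqref{e-1}, \eqref{e-2} and \eqref{e-3}. In particular your step-three ``matching'' of $\chi(\mathbb{P}\mathcal{C}/(T[1])(N,M[1])_{\langle Y\rangle})$ with a stratum of $\mathrm{Ext}^1_{\mathcal{C}}(M,N)$ equates quantities that are not equal: that term is a stratum of $\mathcal{C}(N,M[1])$, not of $\mathcal{C}(M,N[1])$.

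Second, the ``main obstacle'' you flag---that both summands on the right-hand side of Theorem \ref{unified} are literally the same stratum count $\chi(\mathbb{P}\mathrm{Ext}^1_{\mathcal{C}}(M,N)_{\langle Y\rangle})$---cannot be verified, because it is false; it is a typographical error in the statement, and the second summand should read $\chi(\mathbb{P}\mathrm{Ext}^1_{\mathcal{C}}(N,M)_{\langle Y\rangle})$, which makes the theorem exactly Theorem \ref{Pa} specialized to the cluster category (consistent with the paper's remark that Theorem \ref{Pa} generalizes Theorem \ref{unified}). The symmetry you propose to check fails already for $Q$ of type $A_2$ with $M,N$ an exchange pair: each of $\mathcal{C}(M,N[1])$ and $\mathcal{C}(N,M[1])$ is one-dimensional with non-isomorphic middle terms $B$ and $B'$, so the literal formula would give $X_MX_N=2X_B$ instead of the exchange relation $X_MX_N=X_B+X_{B'}$. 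Your attempt stalls precisely because you are trying to establish a symmetry that does not hold; the fix is to correct the statement, after which the two-instance argument above goes through. (Your closing remark that the hypothesis $\mathrm{Ext}^1_{\mathcal{C}}(M,N)\neq 0$ only ensures the projectivization is nonempty is correct.)
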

If either $\mathrm{Ext}^1_{kQ}(M, L)$ or $\mathrm{Ext}^1_{kQ}(L, M)$
vanishes, then Theorem \ref{X-X} coincides with Theorem
\ref{unified} (see \cite{Hubery2005}). In general, it is possible
that both of two extension spaces are not zero.  For example,
consider the affine quiver $Q=\widetilde{D}_4$ with a fixed
orientation. Let $\mc$ be the cluster category of type
$\widetilde{D}_4$. The Auslander-Reiten quiver of $\widetilde{D}_4$
contains three nonhomogeneous tubes, denoted by $\mathcal{T}_0,
\mathcal{T}_1, \mathcal{T}_{\infty}$. Assume that $E_1, E_2$ are two
regular simple modules in $\mathcal{T}_{0}$. Then we have
$$\mathrm{dim}_k\mathrm{Ext}^1_{Q}(E_1, E_2)=\mathrm{dim}_k \mathrm{Ext}^1_Q(E_2, E_1)=1, \quad \mathrm{dim}_{k}\mathrm{Ext}^1_{\mc}(E_1, E_2)=2.$$
By Theorem \ref{X-X}, we obtain
$$
X_{E_1}X_{E_2}=X_{E_1[2]}+1.
$$
where $E_1[2]$ denotes the extension of $E_1$ by $E_2.$ By Theorem
\ref{unified}, we obtain
$$
2X_{E_1}X_{E_2}=2X_{E_1[2]}+2.
$$

\section*{Acknowledgements}
The main idea of this paper comes from a sequence of discussions
 with our teacher Jie Xiao. We are grateful
 to Professor Jie Xiao for his  fruitful ideas. The second author thanks Claus Michael Ringel for his help and
hospitality while staying at University of Bielefeld as an Alexander
von Humboldt Foundation fellow. The authors thank Y. Palu for his
kind suggestions.

\end{document}